\newtheorem{thm}{Theorem}[section]
\newtheorem{corollary}{Corollary}[section]
\newtheorem{lem}{Lemma}[section]
\theoremstyle{definition}
\theoremstyle{remark}
\newtheorem{rem}{Remark}[section]
\definecolor{c20}{rgb}{0.,0.7,0.}
\definecolor{c30}{rgb}{0.,0.,1.}
\definecolor{c40}{rgb}{1,0.1,0.7}
\definecolor{c50}{rgb}{1,0,0}
\definecolor{c20}{rgb}{0.,0.7,0.}
\definecolor{c30}{rgb}{0.,0.,1.}
\definecolor{c40}{rgb}{1,0.1,0.7}
\definecolor{c50}{rgb}{1,0,0}
\def\IF{\infty}
\newcommand{\EE}[1]{\mathbb{E}\left\{#1\right\}}
\newcommand{\kb}[1]{\boldsymbol{#1}}
\newcommand{\vk}[1]{\kb{#1}}
\newcommand{\abs}[1]{\lvert #1 \rvert}
\newcommand{\R}{\mathbb{R}}
\newcommand{\BQN}{\begin{eqnarray}}
\newcommand{\EQN}{\end{eqnarray}}
\newcommand{\BQNY}{\begin{eqnarray*}}
\newcommand{\EQNY}{\end{eqnarray*}}
\newcommand{\BT}{\begin{theo}}
\newcommand{\ET}{\end{theo}}
\newcommand{\BK}{\begin{corollary}}
\newcommand{\EK}{\end{corollary}}
\newcommand{\BEL}{\begin{lem}}
\newcommand{\EEL}{\end{lem}}
\newcommand{\prooftheo}[1]{ \textsc{\bf Proof of Theorem} \ref{#1}:}
\newcommand{\prooflem}[1]{\textsc{\bf Proof of Lemma} \ref{#1}:}
\newcommand{\proofkorr}[1]{\textsc{\bf Proof of Corollary} \ref{#1}:}
\def\pE#1{#1}
\newtheorem{theo}{Theorem}[section]
\newcommand{\COM}[1]{}
\newcommand{\QED}{\hfill $\Box$}
\newcommand{\norm}[1]{\lVert #1 \rVert}
\def\wF{{\wwF}}
\def\rE{\rangle}
\def\rEE{\rangle}
\def\nO{\|}
\def\nOO{\|}
\def\kHA{\mathcal{H}_1}
\def\kHC{\mathcal{H}}
\def\ofp{(\Omega,\mathfrak{F},\mathbb{P})}
\newcommand{\pk}[1]{\mathbb{P} \left\{ #1 \right \} }
\def\wF{\underline{f}}
\def\cprime{$'$} "
\def\cprime{$'$} "
\begin{document}

\centerline{\bf \large
Asymptotic of Non-Crossings probability of Additive Wiener Fields}

 \bigskip
 \centerline{ Pingjin Deng\footnote{School of Finance, Nankai University, 300350, Tianjin, PR China,
and Department of Actuarial Science,
University of Lausanne
UNIL-Dorigny, 1015 Lausanne, Switzerland, email: Pingjin.Deng@unil.ch }
}

\bigskip
\centerline{\today{}}

{\bf Abstract}:
Let $W_i=\{W_i(t_i), t_i\in \R_+\}, i=1,2,\ldots,d$ are independent Wiener processes. $W=\{W(\mathbf{t}),t\in \R_+^d\}$ be the additive Wiener field define as the sum of $W_i$.  For any trend $f$ in $\kHC$ (the reproducing kernel Hilbert Space of $W$), we derive upper and lower bounds for the  boundary non-crossing probability $$P_f=P\{\sum_{i=1}^{d}W_i(t_i) +f(\mathbf{t})\leq u(\mathbf{t}), \mathbf{t}\in\R_+^d\},$$
where  $u: \R_+^d\rightarrow \R_+$ is a measurable function. Furthermore, for large trend functions $\gamma f>0$, we show that the asymptotically relation
$\ln P_{\gamma f}\sim \ln P_{\gamma \underline{f}}$ as $\gamma \to \IF$, where $\underline{f}$ is the projection of $f$ on some closed convex subset of $\kHC$.

{\bf Key words}:Boundary non-crossing probability; reproducing kernel Hilbert space; additive Wiener field;  asymptotics probability.

{\bf AMS Classification:}\  Primary 60G70; secondary 60G10

\section{Introduction}\label{s:1}
For $d$ be a positive integer, let $X_i=\{X_i(t),\,t\in \R_+\},\,i=1,2,\ldots,d$ be independent real-valued stochastic processes on the same probability space $\ofp$. Define the \pE{$d$-parameters} real-valued additive field (additive process)
\BQNY
X(\mathbf{t})=X(t_1,t_2,\ldots,t_d)=\sum_{i=1}^{d}X_i(t_i),\quad \mathbf{t}=(t_1,t_2,\ldots,t_d)\in \R_+^d.
\EQNY
The additive process which plays a key role in studying of the general multiparameter processes, multiparameter potential theory, fractal geometry, spectral asymptotic theory has been actively investigated recently. To have a glance of these results, we refer the reader to \cite{khoshnevisan2003measuring, chen2003small,karol2008small,khoshnevisan1999brownian,khoshnevisan2002level,khoshnevisan2004additive,khoshnevisan2009harmonic} and the references therein.
\newline
On the other hand, calculation of boundary non-crossing probabilities of Gaussian processes is a key topic both of theoretical and  applied probability, see, e.g., 
\cite{Nov99,MR2009980,BNovikov2005,1103.60040,1079.62047,MR2576883,janssen}. Numerous applications concerned with the evaluation of boundary non-crossing probabilities relate  to mathematical finance, risk theory, queuing theory, statistics, physics among many other fields. In the literature, most of contributions are only concentrate on the boundary non-crossing probabilities of Gaussian processes with one-parameter (e.g. Brownian motion, Brownian bridge and fractional Brownian motion), some important results of this field can see in \cite{MR2028621,MR2175400,MR2016767,1137.60023,BiHa1,HMS12,MR3531495,MR3500417}. For multiparameter Gaussian processes, few cases are known about the boundary non-crossing probabilities (see, e.g., \cite{Pillow,hashorva2014boundary,Bi2}).
\newline
In this paper, we are concentrating on the calculation of boundary non-crossing probabilities of additive Wiener field $W$ which defined by \begin{equation}\label{eqhm-1}
W(\mathbf{t})= W_1(t_1)+W_2(t_2)+\ldots+W_d(t_d), \quad \mathbf{t} \in\R^d_+,
\end{equation}
where $W_i=\{W_i(t), t\in \R_+\}, i=1,2,\ldots,d$ are independent Wiener processes define on the same probability space $\ofp$. It can be checked easily that $W$ is a Gaussian field with the convariance function given by
\begin{equation}\label{eqhm-3}
\EE{W(\mathbf{s})W(\mathbf{t})}=\sum_{i=1}^{d}s_i\wedge t_i, \quad
\mathbf{s}=(s_1,s_2,\ldots,s_d), \;\mathbf{t}=(t_1,t_2,\ldots,t_d).
\end{equation}
For two measurable functions $f,u:\R_+^d\rightarrow \R$ we shall investigate the upper and lower bounds for
$$P_f=\pk{W(\mathbf{t})+f(\mathbf{t})\leq u(\mathbf{t}),\;\mathbf{t}\in\R_+^d}$$
In the following, we consider $u$ a general measurable function and $f\neq 0$ to belong to the reproducing kernel Hilbert space (RKHS) of $W$ which is denote by $\kHC$. A precise description of $\kHC$ is given in section \ref{sec:pre}, where the inner product $\langle f,g\rangle$ and the corresponding norm $\|f\|$ for $f,\,g \in \kHC$ are also defined.

As in \cite{HMS12}, a direct application of Theorem 1' in \cite{LiKuelbs}
shows that for any $f\in  \kHC $ we have
\newcommand{\Abs}[1]{\Bigl\lvert #1 \Bigr\rvert}
 \BQN\label{eq:00:2b}
\Abs{P_f - P_0} &\le \frac {1 }{\sqrt{2 \pi}} \norm{ f}.
\EQN
Further, for any $g\in\kHC$ such that $g\geq f$, we obtain
\BQN\label{eq:WL}
\Phi(\alpha - \norm{g}) \le P_{g}\le P_f \le \Phi(\alpha+ \norm{f}),
\EQN
where $\Phi$ is the distribution of an $N(0,1)$ random variable and $\alpha=\Phi^{-1}(P_0) $ is a finite constant. When $f\le 0$, then we can take always $g=0$ above which make the lower bound of \eqref{eq:WL} useful if $\norm{f}$ is large. When $\norm{f}$ is small, the equation \eqref{eq:00:2b} provides a good bound for the approximation rate of $P_f$ by $P_0$. Since the explicit formulas for computing $p_f$ seem to be impossible, the asymptotic performance of the bounds for trend functions $\gamma f$ with $\gamma\to \infty$ and $\gamma\to 0$ are thus worthy of consideration. This paper we shall consider the former case, and we obtain the following:
\newline
If  $f(\mathbf{t}_0)>0$ for some $\mathbf{t}_0$ with non-negative components, then
\pE{for any  $g\ge f, g\in \kHC$ we have}
\BQN\label{LD1}
\ln P_{\gamma f} \ge    \ln
\Phi(\alpha - \gamma \wF ) \ge  -(1+o(1))\frac{\gamma^2}{2}\norm{g}^2, \quad \gamma \to \IF,
\EQN
hence
\BQN\label{LD}
{\ln P_{\gamma f} \ge -(1+o(1))\frac{\gamma^2}{2}\norm{\wF}^2, \quad \gamma \to \IF},
\EQN
where $\wF$ (which is unique and exists) solves the following minimization problem
\BQN \label{OP}
\min_{ g,f \in { \kHC}, g \ge f}   \norm{g}= \norm{\wF}>0.
\EQN
\pE{In Section} 2 we shall show that  $\underline{f}$ is the projection of $f$ on a closed convex set of $\kHC$, and moreover
we show that
\BQN\label{LD2}
{\ln P_{\gamma f} \sim  \ln P_{\gamma \underline{f}} \sim - \frac{\gamma^2}{2}\norm{\underline{f}}^2, \quad \gamma \to \IF}.
\EQN
The rest of this paper are organized as follows: In section \ref{sec:pre} we  briefly talk about the RKHS of additive Wiener field and construct the solution of the minimization problem \eqref{OP}. We present our main results in Section \ref{sec:main}. The proofs of the results in this paper are shown in Section \ref{sec:proofs}, and we conclude this paper by Appendix.

\section{Preliminaries}\label{sec:pre}
This section reviews basic results of the reproducing kernel Hilbert space (RKHS),  and we shall give a representation of the RKHS of additive Wiener field $W$. We shall also construct $V$ as a closed convex set of $\kHC$, which finally enable us to prove that $\underline{f}$ in \eqref{OP} is the projection of $f$ on $V$. The idea of constructing $V$ comes from a similar result in one-parameter case (see e.g., \cite{BiHa1, janssen, Pillow, 1137.60023}).
\newline
In the following of \pE{this paper} bold letters are reserved for vectors, so we shall write for instance
$\mathbf{t}=(t_1, t_2, \ldots, t_d)\in\R^d_+$ and   $\lambda_1$  denote the Lebesgue measures on $\R_+$,
whereas $ds$ a mean integration with respect to this measure.
\subsection{The RKHS of additive Wiener field}
Recall that $W_1$ is an one-parameter Wiener process. It is well-known (see e.g., \cite{berlinet})
that the RKHS of the Wiener process $W_1$, denoted by $\kHA$, is characterized as follows
$$\kHA=\Bigl\{h:\R_+\rightarrow \R\big|h(t)=\int_{[0,t]}h'(s)ds,\quad  h'\in L_2(\R_+, \lambda_1) \Bigr\}, $$
with the inner product $\langle h,g\rangle_1=\int_{\R_+}h'(s)g'(s)ds$ and the corresponding norm $\|h\|_1^2=\langle h,h\rangle$. The description of RKHS for $W_i,\,i=2,3,\ldots,d$ are evidently the same.
We now begin to construct the RKHS of additive Wiener field $W$, for any
\BQNY
h_1(\mathbf{t})&=&f_1(t_1)+f_2(t_2)+\ldots+f_d(t_d),\\
h_2(\mathbf{t})&=&g_1(t_1)+g_2(t_2)+\ldots+g_d(t_d),
\EQNY
where $f_i(t_i),\,g_i(t_i)\in\kHA,\;i=1,2,\ldots,d,$ define the inner product
\begin{equation}\label{eqhm-10}
\langle h_1,h_2\rangle=\sum_{i=1}^{d}\int_{\R_+}f_i'(s)g_i'(s)ds.
\end{equation}
\begin{rem}
From lemma \ref{lemA1} in Appendix we have the representation $h(\mathbf{t})=h_1(t_1)+h_2(t_2)+\ldots+h_d(t_d)$ is unique, hence the above inner product is well defined.
\end{rem}
Next, in view of lemma \ref{lemA2} in Appendix we have the following
\begin{lem}
The RKHS for additive Wiener field $W$ is given by
\begin{equation}
\kHC=\Bigl\{h:\R_+^d\rightarrow \R\big|h(\mathbf{t})=\sum_{i=1}^{d}h_i(t_i),\; \text{where}\;  h_i\in \kHA,  i=1,2,\ldots,d \Bigr\}
\end{equation}
equipped with the norm $\norm{h}^2=\langle h,h\rangle.$
\end{lem}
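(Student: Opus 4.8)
The plan is to verify that $\kHC$ equipped with the inner product \eqref{eqhm-10} satisfies the two defining properties of the RKHS of $W$, and then to conclude by the Moore--Aronszajn uniqueness theorem. Write $K(\mathbf{s},\mathbf{t})=\EE{W(\mathbf{s})W(\mathbf{t})}=\sum_{i=1}^{d}s_i\wedge t_i$ (by \eqref{eqhm-3}) and, for fixed $\mathbf{s}$, put $k_{\mathbf{s}}(\mathbf{t})=K(\mathbf{s},\mathbf{t})$. First I would check that $k_{\mathbf{s}}\in\kHC$: since $s_i\wedge t_i=\int_{[0,t_i]}\1_{[0,s_i]}(r)\,dr$ and $\1_{[0,s_i]}\in L_2(\R_+,\lambda_1)$, the $i$-th summand of $k_{\mathbf{s}}$ lies in $\kHA$, so $k_{\mathbf{s}}$ has exactly the additive form required for membership in $\kHC$.

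Next comes the reproducing property: for $h(\mathbf{t})=\sum_{i=1}^{d}h_i(t_i)\in\kHC$, applying the one-parameter reproducing property of $\kHA$ in each coordinate gives
$$\langle h,k_{\mathbf{s}}\rangle=\sum_{i=1}^{d}\int_{\R_+}h_i'(r)\,\1_{[0,s_i]}(r)\,dr=\sum_{i=1}^{d}h_i(s_i)=h(\mathbf{s}).$$
Here it is crucial that the decomposition $h=\sum_i h_i$ with $h_i\in\kHA$ be unique — this is Lemma \ref{lemA1} in the Appendix — since that is what makes the inner product \eqref{eqhm-10}, hence the computation above, unambiguous. It remains to see that $(\kHC,\langle\cdot,\cdot\rangle)$ is a Hilbert space: by Lemma \ref{lemA1} the map $h\mapsto(h_1,\dots,h_d)$ is a linear bijection of $\kHC$ onto $\bigoplus_{i=1}^{d}\kHA$ (the $h_i$ are pinned down because every element of $\kHA$ vanishes at $0$), and by \eqref{eqhm-10} it is an isometry, so completeness of $\kHC$ follows from that of the Hilbert direct sum; this is where Lemma \ref{lemA2} enters. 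Moore--Aronszajn then identifies $\kHC$ as the RKHS of $K$, i.e. of $W$, with $\norm{h}^2=\langle h,h\rangle$.

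An equivalent route writes $K=\sum_{i=1}^{d}K_i$ with $K_i(\mathbf{s},\mathbf{t})=s_i\wedge t_i$ a reproducing kernel on $\R_+^d$ whose RKHS consists of the functions $\mathbf{t}\mapsto h_i(t_i)$, $h_i\in\kHA$, and then applies the Aronszajn sum-of-kernels theorem; uniqueness of the additive decomposition (Lemma \ref{lemA1}) collapses the infimal-convolution norm attached to the sum to $\sum_{i=1}^{d}\norm{h_i}_1^2$. I expect the only genuine obstacle to be exactly this uniqueness point: one must know that the subspaces $\{\,\mathbf{t}\mapsto h_i(t_i):h_i\in\kHA\,\}$ meet pairwise (indeed mutually) only in $\{0\}$, so that \eqref{eqhm-10} is well defined and no nontrivial minimization is hidden in the norm. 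Granting Lemma \ref{lemA1}, the rest is routine coordinatewise bookkeeping on the one-parameter space $\kHA$.
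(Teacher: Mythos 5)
Your argument is correct, but its main line is a different (and more self-contained) route than the paper's. The paper proves this lemma essentially by citation: it combines Lemma \ref{lemA1} (uniqueness of the additive decomposition, since every element of $\kHA$ vanishes at $0$) with Lemma \ref{lemA2}, the classical Aronszajn sum-of-kernels theorem, which identifies the RKHS of $R=\sum_{i=1}^d R_i$ as the set of sums $\sum_i h_i$ with the infimum norm, the infimum collapsing to the single decomposition because of Lemma \ref{lemA1}; that is exactly your second, ``equivalent route'' paragraph. Your primary argument instead verifies the RKHS axioms directly: the kernel sections $k_{\mathbf{s}}(\mathbf{t})=\sum_i s_i\wedge t_i$ lie in $\kHC$ because $s_i\wedge t_i=\int_{[0,t_i]}\1_{[0,s_i]}(r)\,dr$, the reproducing property follows coordinatewise from \eqref{eqhm-10}, completeness follows from the isometric bijection $h\mapsto(h_1,\dots,h_d)$ onto the Hilbert direct sum $\bigoplus_{i=1}^d\kHA$, and Moore--Aronszajn uniqueness then pins down $\kHC$ as the RKHS of $W$. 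This buys independence from the sum-of-kernels theorem and makes transparent exactly where uniqueness of the decomposition is used (well-definedness of \eqref{eqhm-10} and of the coordinate map); the paper's route is shorter and would still apply, via the infimum norm, in situations where the decomposition is not unique. One small slip: you credit completeness of the direct sum to Lemma \ref{lemA2}, but that lemma is the sum-of-kernels statement and plays no role in your first argument --- completeness of a finite direct sum of Hilbert spaces is standard and needs no citation. With that citation removed or corrected, both of your routes are sound.
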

For notational simplicity in the following  we shall use the same notation $\langle \cdot,\cdot\rangle$ and $\norm{\cdot}$ to present the inner product and norm respectively, on space $\kHA$ and $\kHC$.

\subsection{The solution of minimization problem}
In this subsection, we begin to solve equation \eqref{OP}. For any $h\in\kHA$, it has been shown (see \cite{1137.60023}), that
the smallest concave majorant of $h$ solves
\BQNY
\min_{ g,f \in { \kHA}, g \ge f}   \norm{g}= \norm{\wF}>0.
\EQNY
Moreover, as shown in \cite{janssen}
 the smallest concave majorant of $h$, which we denote by $\underline{h}$,
 can be written analytically as the unique projection of $h$ on the  closed convex set
$$V_1=\{h\in \kHA \big|\,h'(s) \; \text{is a non-increasing function} \}$$
i.e., $\underline{h}= Pr_{V_1}h$. Here we write $Pr_{A}h$ for the projection of $h$ on some closed set $A$ also for
other Hilbert spaces considered below. Further, if we define
$$\widetilde{V}_1=\{h\in \kHA  \big|\,\langle h,f\rE \leq 0 \;\text{for any}\; f\in V_1\} $$
be the polar cone of  $V_1$. Then the following hold
\begin{lem}\label{lemma 2.2}
\cite{hashorva2014boundary} With  the above notation and definitions we have
\begin{itemize}
\item[(i)]  If $h\in V_1$,  then $h\geq 0$.
\item[(ii)] If $h\in \widetilde{V}_1$,  then $h\leq 0$.
\item[(iii)] We have $\langle Pr_{V_1}h, Pr_{\widetilde{V}_1}h\rangle=0$ and further
\begin{equation}
h=Pr_{V_1}h+Pr_{\widetilde{V}_1}h.
\end{equation}
\item[(iv)] If $h=h_1+h_2$, $h_1\in V_1$, $h_2\in \widetilde{V}_1$ and $\langle h_1,  h_2\rangle=0$, then $h_1=Pr_{V_1}h$ and $h_2=Pr_{\widetilde{V}_1}h$.
\item[(v)] The unique solution of the minimization problem $\min_{g\geq h, g\in \kHA }\norm{g}$ is $\underline{h}=Pr_{V_1}h$.
\end{itemize}
\end{lem}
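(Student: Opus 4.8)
The plan is to read all five items as facets of a single fact: $V_1$ is a closed convex cone in $\kHA$ whose polar cone $\widetilde V_1$ consists of non-positive functions, and the Moreau decomposition $h=Pr_{V_1}h+Pr_{\widetilde V_1}h$ holds with the two pieces orthogonal. I would first record the identification $V_1=\{h\in\kHA:\ h\text{ is concave}\}$, which holds because for absolutely continuous $h$ with $h(0)=0$ concavity is equivalent to $h'$ being non-increasing; this exhibits $V_1$ as a convex cone, and it is closed since norm convergence in $\kHA$ implies pointwise convergence (from $|h_n(t)-h(t)|=|\langle h_n-h,e_t\rangle|\le\norm{h_n-h}\sqrt t$, where $e_t(s):=s\wedge t$ is the representer of evaluation at $t$) and a pointwise limit of concave functions is concave. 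Hence $Pr_{V_1}h$ and $Pr_{\widetilde V_1}h$ are well defined.

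For (i), if $h\in V_1$ then its non-increasing representative $h'\in L_2(\R_+,\lambda_1)$ cannot be negative anywhere: $h'(s_0)<0$ would force $h'\le h'(s_0)<0$ on $[s_0,\IF)$, contradicting square-integrability, so $h'\ge0$ a.e. and $h(t)=\int_{[0,t]}h'(s)\,ds\ge0$. For (ii), I would test the defining condition of $\widetilde V_1$ against the functions $e_t$: since $e_t'=\mathbb{I}_{[0,t]}$ is non-increasing, $e_t\in V_1$, so for $h\in\widetilde V_1$ one gets $h(t)=\langle h,e_t\rangle\le0$ for every $t\ge0$. Items (iii) and (iv) I would prove by the standard Hilbert-space argument for cones, starting from the variational characterisation of the metric projection: with $p=Pr_{V_1}h$, applying $\langle h-p,x-p\rangle\le0$ at $x=0$ and $x=2p$ (both in the cone $V_1$) yields $\langle h-p,p\rangle=0$, hence $\langle h-p,x\rangle\le0$ for all $x\in V_1$, i.e. $q:=h-p\in\widetilde V_1$; then $h=p+q$, $\langle p,q\rangle=0$, and the computation $\langle h-q,y-q\rangle=\langle p,y\rangle\le0$ for $y\in\widetilde V_1$ identifies $q=Pr_{\widetilde V_1}h$. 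For (iv) the same identity run in reverse works: if $h=h_1+h_2$ with $h_1\in V_1$, $h_2\in\widetilde V_1$ and $\langle h_1,h_2\rangle=0$, then $\langle h-h_1,x-h_1\rangle=\langle h_2,x\rangle\le0$ for all $x\in V_1$, so $h_1=Pr_{V_1}h$, and symmetrically $h_2=Pr_{\widetilde V_1}h$.

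The substantive part is (v). From (iii) and (ii), $Pr_{V_1}h=h-Pr_{\widetilde V_1}h\ge h$, so $p:=Pr_{V_1}h$ is feasible for $\min_{g\ge h,\,g\in\kHA}\norm g$. For any feasible $g$ I would expand $\norm g^2=\norm{g-p}^2+2\langle g-p,p\rangle+\norm p^2$ and use $\langle h-p,p\rangle=0$ to reduce the cross term to $\langle g-h,p\rangle$, so it remains to show $\langle g-h,p\rangle\ge0$ when $g-h\ge0$ and $p\in V_1$. Here I would invoke the structure of $V_1$: $p'$ is non-increasing, non-negative and in $L_2$, hence $p'(s)\to0$ and $p'(s)=\mu((s,\IF))$ for the non-negative measure $\mu=-dp'$; then Fubini (legitimate by Cauchy--Schwarz) turns $\langle g-h,p\rangle=\int_0^\IF(g-h)'(s)p'(s)\,ds$ into $\int_{(0,\IF)}(g-h)(r)\,\mu(dr)\ge0$. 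Thus $\norm g\ge\norm p$, with equality forcing $\norm{g-p}=0$, i.e. $g=p$, which gives uniqueness; combined with the identification $\underline h=Pr_{V_1}h$ already recalled from \cite{janssen}, this is (v). I expect the cross-term positivity to be the only real obstacle: it needs both the representation of elements of $V_1$ via a non-negative Stieltjes measure and the fact --- forced by $L_2$ --- that no mass of $\mu$ escapes to infinity, plus a careful interchange of integrals to pass from $\int(g-h)'p'$ to $\int(g-h)\,d\mu$.
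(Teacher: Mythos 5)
Your proof is correct, but it is not the route the paper takes: the paper only proves item (i) (by the same one-line observation that a non-increasing $h'$ must be non-negative, so $h(t)=\int_0^t h'(s)\,ds\ge 0$) and then simply defers (ii)--(v) to \cite{hashorva2014boundary} (with the concave-majorant characterisation taken from \cite{janssen}), whereas you supply a complete, self-contained argument. Concretely, you add three things the paper leaves to the references: (a) the verification that $V_1$ is a closed convex cone (via the identification with concave elements of $\kHA$ and pointwise convergence through the representers $e_t(s)=s\wedge t$), which is what makes the projections well defined; (b) the Moreau two-cone decomposition argument for (iii)--(iv), including testing the polar-cone condition against $e_t\in V_1$ to get (ii); and (c) the real content of (v), namely that $p=Pr_{V_1}h\ge h$ is feasible and that for any feasible $g$ the cross term $\langle g-h,p\rangle$ is non-negative, which you obtain by writing $p'(s)=\mu((s,\IF))$ for the non-negative measure $\mu=-dp'$ and exchanging integrals (justified by Cauchy--Schwarz), so that $\langle g-h,p\rangle=\int_{(0,\IF)}(g-h)\,d\mu\ge 0$, giving $\norm{g}^2\ge\norm{g-p}^2+\norm{p}^2$ and hence both minimality and uniqueness. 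A small bonus of your write-up is that it makes explicit where square-integrability enters: both in (i) (a non-increasing $h'$ that ever became negative could not lie in $L_2(\R_+)$, a point the paper's one-liner glosses over) and in (v) (it forces $p'(s)\to 0$, so no mass of $\mu$ escapes to infinity). The trade-off is length: the paper's citation is shorter, while your argument is self-contained and transparent about the hypotheses actually used.
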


Since we are going to work with functions $f$ in $\kHC$ we need to consider the projection of such $f$ on a particular closed convex set.
In the following we shall write $f=f_1+f_2+\ldots+f_d$ meaning that $f(\vk{t})=f_1(t_1)+ f_2(t_2)+\ldots+f_d(t_d) $ where $f_1,f_2,\ldots,f_d \in \kHA$. Note in passing that this decomposition is unique for any $f\in \kHC$.
Define the closed convex set
$$V_{2}=\{h=h_1+h_2+\ldots+h_d\in \kHC \big|  h_1,h_2,\ldots,h_d \in V_1\}$$
and let $\widetilde{V_{2}}$ be the polar cone of $V_{2}$ given by
$$\widetilde{V_{2}}=\{h \in \kHC \big|\langle h, v\rEE  \leq 0 \;\text{for any}\; v\in V_{2}\},$$
with inner product from \eqref{eqhm-10}.
Analogous to Lemma \ref{lemma 2.2} we have
\begin{lem}\label{lemma 3.2}
For any  $h=h_1+h_2+\ldots+h_d\in \kHC$, we have
\begin{itemize}
\item[(i)]  If $h\in V_2$,  then $h_i\geq 0,i=1,2,\ldots,d$.
\item[(ii)] If $h\in \widetilde{V}_2$,  then $h_i\leq 0,i=1,2,\ldots,d$.
\item[(iii)] We have $\langle Pr_{V_2}h, Pr_{\widetilde{V}_2}h\rangle=0$ and further
\begin{equation}\label{eqhm-6}
h=Pr_{V_2}h+Pr_{\widetilde{V}_2}h.
\end{equation}
\item[(iv)] If $h=h_1+h_2$, $h_1\in V_2$, $h_2\in \widetilde{V}_2$ and $\langle h_1,  h_2\rangle=0$, then $h_1=Pr_{V_2}h$ and $h_2=Pr_{\widetilde{V}_2}h$.
\item[(v)] The unique solution of the minimization problem $\min_{g\geq h, g\in \kHC }\norm{g}$ is
\begin{equation}\label{eqhm-7}
\underline{h}=Pr_{V_2}h=Pr_{V_1}h_1+ Pr_{V_1}h_2+\ldots+Pr_{V_1}h_d.
\end{equation}
\end{itemize}
\end{lem}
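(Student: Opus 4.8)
The plan is to reduce every assertion to the corresponding one-parameter statement in Lemma \ref{lemma 2.2} by exploiting the direct-sum structure of $\kHC$. The key observation is that the inner product \eqref{eqhm-10} makes $\kHC$ the orthogonal direct sum of $d$ copies of $\kHA$, where the $i$-th copy is $\{h_i(t_i): h_i\in\kHA\}$; indeed $\langle f,g\rangle=\sum_{i=1}^d\langle f_i,g_i\rangle$ so that components with different indices are orthogonal. Consequently $V_2=V_1\oplus V_1\oplus\cdots\oplus V_1$ is the "product" of the one-parameter convex cones under this orthogonal decomposition. For (i) and (ii): if $h=h_1+\cdots+h_d\in V_2$ then by definition each $h_i\in V_1$, so $h_i\ge0$ by Lemma \ref{lemma 2.2}(i); similarly, I would show $\widetilde V_2=\widetilde V_1\oplus\cdots\oplus\widetilde V_1$ by testing against the elements $v=(0,\ldots,v_j,\ldots,0)$ with $v_j\in V_1$ (these lie in $V_2$), which forces $\langle h_j,v_j\rangle\le0$ for all $v_j\in V_1$, i.e. $h_j\in\widetilde V_1$; the converse inclusion is immediate from additivity of the inner product over components. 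Then $h_i\le0$ follows from Lemma \ref{lemma 2.2}(ii).

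For (iii), I would prove the componentwise identity $Pr_{V_2}h=\sum_{i=1}^d Pr_{V_1}h_i$, where $Pr_{V_1}h_i$ is computed in the $i$-th copy of $\kHA$. Since $V_2$ is a product set and the squared distance splits as $\norm{h-g}^2=\sum_{i=1}^d\norm{h_i-g_i}^2$ for $g=g_1+\cdots+g_d\in V_2$, the minimisation over $g\in V_2$ decouples into $d$ independent minimisations over $g_i\in V_1$, each solved by $Pr_{V_1}h_i$. With this in hand, $Pr_{\widetilde V_2}h=\sum_i Pr_{\widetilde V_1}h_i$ (using the analogous product description of $\widetilde V_2$), and then \eqref{eqhm-6} and the orthogonality $\langle Pr_{V_2}h,Pr_{\widetilde V_2}h\rangle=\sum_i\langle Pr_{V_1}h_i,Pr_{\widetilde V_1}h_i\rangle=0$ follow termwise from Lemma \ref{lemma 2.2}(iii).

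Part (iv) is the uniqueness of the decomposition: given $h=h^{(1)}+h^{(2)}$ with $h^{(1)}\in V_2$, $h^{(2)}\in\widetilde V_2$, $\langle h^{(1)},h^{(2)}\rangle=0$, I would project onto each coordinate copy of $\kHA$. Writing $h^{(1)}=\sum_i h_i^{(1)}$, $h^{(2)}=\sum_i h_i^{(2)}$ with $h_i^{(1)}\in V_1$ and $h_i^{(2)}\in\widetilde V_1$, the orthogonality $\sum_i\langle h_i^{(1)},h_i^{(2)}\rangle=0$ together with $\langle h_i^{(1)},h_i^{(2)}\rangle\le0$ (by $h_i^{(2)}\in\widetilde V_1$, $h_i^{(1)}\in V_1$) forces $\langle h_i^{(1)},h_i^{(2)}\rangle=0$ for each $i$; then Lemma \ref{lemma 2.2}(iv) gives $h_i^{(1)}=Pr_{V_1}h_i$ and $h_i^{(2)}=Pr_{\widetilde V_1}h_i$, and summing recovers $h^{(1)}=Pr_{V_2}h$, $h^{(2)}=Pr_{\widetilde V_2}h$. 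Finally, part (v): $V_2$ is closed and convex, and by part (i) every $g\in V_2$ satisfies $g\ge h$ iff... more precisely I would argue that the constraint set $\{g\in\kHC: g\ge h\}$ does not split coordinatewise, so the direct route is to invoke Lemma \ref{lemma 2.2}(v) componentwise: $g=\sum_i g_i\ge h=\sum_i h_i$ need not imply $g_i\ge h_i$, so the careful point is to show the minimiser nonetheless has this product form. The cleanest path is to observe that $\underline h:=\sum_i Pr_{V_1}h_i\in V_2$ satisfies $\underline h\ge h$ pointwise (since $Pr_{V_1}h_i=\underline{h_i}\ge h_i$ for each $i$ by Lemma \ref{lemma 2.2}(v), being the smallest concave majorant), and conversely any feasible $g\ge h$ with $g\in\kHC$ can be replaced, without increasing the norm, by $Pr_{V_2}g\in V_2$ which still dominates $h$; minimising over $V_2$ then decouples as in part (iii) and yields $\sum_i Pr_{V_1}h_i$, giving \eqref{eqhm-7}. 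The main obstacle I anticipate is precisely this last point — that restricting attention to $V_2$ loses no generality in \eqref{OP}, i.e. that the unconstrained minimiser already lies in $V_2$ — which needs the fact that $Pr_{V_2}$ is norm-nonincreasing and order-preserving on the relevant cone; I would handle it by the projection-replacement argument just sketched, mirroring the one-parameter proof in \cite{hashorva2014boundary, janssen}.
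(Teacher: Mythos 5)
Your overall strategy is the same as the paper's: reduce every item to Lemma \ref{lemma 2.2} through the componentwise structure of $\kHC$, and your treatments of (i)--(iv) are correct (indeed more explicit than the paper, which simply defers (iii)--(iv) to the cited literature); your identification $\widetilde{V}_2=\widetilde V_1\oplus\cdots\oplus\widetilde V_1$ by testing against $v(\mathbf{t})=v_j(t_j)$ is exactly the paper's argument for (ii), and your use of the squared norm $\norm{h-g}^2=\sum_i\norm{h_i-g_i}^2$ for the decoupled projections is the right bookkeeping.

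The one genuine flaw is in part (v), and it is a flaw of your own making: you assert that $g\ge h$ on $\R_+^d$ ``need not imply $g_i\ge h_i$'', and you build a projection-replacement detour around this supposed obstacle. The assertion is false. Every element of $\kHA$ vanishes at the origin ($g_j(0)=\int_{[0,0]}g_j'=0$), so evaluating $g\ge h$ at the axis points $\mathbf{t}=(0,\ldots,0,t_i,0,\ldots,0)$ gives $g_i(t_i)\ge h_i(t_i)$ for every $i$ and every $t_i$; hence the constraint set does split coordinatewise, and this is precisely how the paper proves (v) directly. The point is not merely cosmetic: your replacement of a feasible $g$ by $Pr_{V_2}g$ (which indeed dominates $g$ and has no larger norm) only reduces the problem to minimising over $\{g\in V_2: g\ge h\}$, and your final step ``minimising over $V_2$ then decouples as in part (iii)'' still needs the order constraint $g\ge h$ to decouple into $g_i\ge h_i$ --- something part (iii) does not provide, since the projection there carries no order constraint, and something you have explicitly disclaimed. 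Once you insert the axis-evaluation observation, the decoupling is immediate, Lemma \ref{lemma 2.2}(v) applies in each coordinate, and the detour through $Pr_{V_2}g$ becomes unnecessary (though harmless); you may also wish to note explicitly that uniqueness follows because each one-dimensional minimiser $\underline{h_i}$ is unique and the decomposition of elements of $\kHC$ into components is unique (Lemma \ref{lemA1}).
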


\section{Main Result}\label{sec:main}
Consider two measurable d-parameter functions $f,u:\R_+^d\rightarrow \R$. Suppose that $f(\mathbf{0})=0$ and $f\in \kHC$.
Hence we can write
$$f(\mathbf{t})=\sum_{i=1}^{d}f_i(t_i),\quad f_i(t_i)\in \kHA,\,i=1,2,\ldots,d$$
we also suppose $f_i(0)=0,i=1,2,\ldots,d$ in the above decomposition. Recall their representations  $f_i(t_i)=\int_{[0,t_i]}f'_i(s)ds,\quad  f'_i\in L_2(\R_+, \lambda_1), i=1,2,\ldots,d.$ We shall estimate the boundary non-crossing probability
\BQNY
 P_f=\pk{W(\mathbf{t})+f(\mathbf{t})\leq u(\mathbf{t}),\;\mathbf{t}\in\R_+^d}.
 \EQNY
 In  the following we set  $\underline{f_i}= Pr_{V_1}f_i,i=1,2,\ldots,d$ and $\underline{f}= Pr_{V_2}f$.
We state next our main result:

\begin{thm} \label{Thn1} Let the following conditions hold:
\BQN\label{conA1}\lim_{t_i \rightarrow\infty}u(0,\ldots,t_i,0,\ldots,0)\underline{f_{i}'}(t_i)=0,\;i=1,2,\ldots,d.
\EQN
Then we have
\begin{equation*}\begin{gathered}
P_f\leq P_{ f-\underline{f} }
\exp\biggl (- \sum_{i=1}^{d}\int_{\R_+}u(0,\ldots,t_i,0,\ldots,0)d \underline{f_{i}'}(t_i)
-\frac12\|\underline{f}\nOO ^2\biggr).
\end{gathered}
\end{equation*}
\end{thm}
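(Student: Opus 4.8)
The plan is to absorb the ``non-negative part'' $\underline f=Pr_{V_2}f$ of the trend by a Cameron--Martin change of measure and then to bound the resulting density pathwise on the non-crossing event. Write $f=\underline f+(f-\underline f)$, where by Lemma~\ref{lemma 3.2} one has $\underline f=\sum_{i=1}^d\underline{f_i}$ with $\underline{f_i}=Pr_{V_1}f_i\in V_1$; in particular each $\underline{f_i'}$ is non-increasing, and since $\underline{f_i'}\in L_2(\R_+,\lambda_1)$ it tends to $0$ at $\infty$, so $\underline{f_i'}\ge0$ throughout and $d\underline{f_i'}$ is a finite non-positive measure on $\R_+$. Put $A=\{W(\mathbf t)+(f-\underline f)(\mathbf t)\le u(\mathbf t),\ \mathbf t\in\R_+^d\}$, so $\pk{A}=P_{f-\underline f}$. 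The image of $\underline f$ under the canonical isometry onto the Gaussian space of $W$ is $\mathcal{I}(\underline f)=\sum_{i=1}^d\int_{\R_+}\underline{f_i'}(s)\,dW_i(s)$ (one checks $\EE{\mathcal{I}(\underline f)W(\mathbf t)}=\underline f(\mathbf t)$ using independence of the $W_i$), a centred Gaussian variable with variance $\norm{\underline f}^2=\sum_i\norm{\underline{f_i}}^2$, and the Cameron--Martin formula applied to the shift of $W$ by $\underline f$ gives
$$P_f=\EE{\mathbf{1}_A\,\exp\Bigl(\mathcal{I}(\underline f)-\tfrac12\norm{\underline f}^2\Bigr)}.$$
Hence it suffices to prove that, a.s.\ on $A$, $\ \mathcal{I}(\underline f)\le-\sum_{i=1}^d\int_{\R_+}u(0,\dots,t_i,\dots,0)\,d\underline{f_i'}(t_i)$, for then the exponential in the display is bounded by $\exp(-\sum_i\int u(0,\dots,t_i,\dots,0)\,d\underline{f_i'}-\tfrac12\norm{\underline f}^2)$ and we may pull it out of the expectation, leaving $\pk{A}=P_{f-\underline f}$.

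A key deterministic identity is needed first. Put $g_i:=f_i-\underline{f_i}=Pr_{\widetilde V_1}f_i$, so $g_i\le0$ by Lemma~\ref{lemma 2.2}(ii) and $g_i(0)=0$. Since $g_i\le0$ and $d\underline{f_i'}\le0$, the map $T\mapsto\int_{(0,T]}g_i\,d\underline{f_i'}$ is non-negative and non-decreasing; integrating by parts and using the polar-cone orthogonality $\langle\underline{f_i},g_i\rangle=\int_{\R_+}\underline{f_i'}\,g_i'\,ds=0$ together with $g_i(T)\underline{f_i'}(T)\le0$ shows its limit is $\le0$. Therefore $g_i(T)\underline{f_i'}(T)\to0$ as $T\to\infty$ and $\int_{(0,T]}g_i\,d\underline{f_i'}=0$ for every $T$ (equivalently, $d\underline{f_i'}$ is carried by the contact set $\{f_i=\underline{f_i}\}$).

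Now fix $i$, write $u_i(s):=u(0,\dots,s,\dots,0)$, and restrict the defining inequality of $A$ to the $i$-th coordinate axis; using $f_i(0)=\underline{f_i}(0)=0$ this gives $W_i(s)\le u_i(s)-g_i(s)$ for all $s\ge0$ on $A$. Riemann--Stieltjes integration by parts (legitimate because $\underline{f_i'}$ has bounded variation and $W_i$ is continuous) yields, for each $T$,
$$\int_0^T\underline{f_i'}\,dW_i=\underline{f_i'}(T)W_i(T)-\int_{(0,T]}W_i\,d\underline{f_i'}\le\underline{f_i'}(T)W_i(T)-\int_{(0,T]}(u_i-g_i)\,d\underline{f_i'},$$
where the inequality uses $W_i\le u_i-g_i$ on $A$ and $d\underline{f_i'}\le0$; by the previous paragraph the last integral equals $\int_{(0,T]}u_i\,d\underline{f_i'}$. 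Letting $T\to\infty$, the left side converges a.s.\ to the Wiener integral $\int_{\R_+}\underline{f_i'}\,dW_i$ (a continuous $L_2$-bounded martingale in $T$), while on $A$
$$\underline{f_i'}(T)W_i(T)=\underline{f_i'}(T)\bigl(W_i(T)-u_i(T)+g_i(T)\bigr)+\underline{f_i'}(T)u_i(T)-\underline{f_i'}(T)g_i(T),$$
in which the first term is $\le0$, the second $\to0$ by hypothesis~\eqref{conA1}, and the third $\to0$ by the identity above; hence $\limsup_{T\to\infty}\underline{f_i'}(T)W_i(T)\le0$ on $A$. Passing to the limit gives, a.s.\ on $A$, $\int_{\R_+}\underline{f_i'}\,dW_i\le-\int_{\R_+}u_i\,d\underline{f_i'}$; summing over $i$ and inserting into the Cameron--Martin identity completes the proof. (The case $P_{f-\underline f}=0$ is trivial: $\underline f\ge0$ forces $\{W+f\le u\}\subseteq\{W+(f-\underline f)\le u\}$, so $P_f=0$.)

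The main obstacle is this last limiting step: for a general trend the boundary term $\underline{f_i'}(T)W_i(T)$ produced by the integration by parts need not vanish along the typical path of $W_i$ (which oscillates at the law-of-iterated-logarithm rate), and it is precisely here that assumption~\eqref{conA1}, combined with the one-sided pathwise control $W_i\le u_i-g_i$ supplied by the event $A$ and with the contact-set identity $g_i(T)\underline{f_i'}(T)\to0$, is used to force $\limsup_{T\to\infty}\underline{f_i'}(T)W_i(T)\le0$. One must also check, again via \eqref{conA1} and the finiteness of $d\underline{f_i'}$, that $\int_{\R_+}u_i\,d\underline{f_i'}$ is well defined, so that the stated bound is meaningful.
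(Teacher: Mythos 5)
Your proof is correct, and it reaches the paper's bound by a related but genuinely different route. The paper performs a Girsanov change of measure with the \emph{full} trend $f$, writes $P_f=\exp(-\tfrac12\norm{f}^2)\,\EE{\exp\bigl(\sum_i\int f_i'\,dW_i^0\bigr)1_u(W^0)}$ with $W_i^0=W_i+f_i$, splits $f_i'=\underline{f_i}'+(Pr_{\widetilde V_1}f_i)'$ and $\norm{f}^2=\norm{\underline f}^2+\sum_i\norm{Pr_{\widetilde V_1}f_i}^2$, bounds the $\underline{f_i}'$-integrals pathwise on the event $\{W^0\le u\}$ (where the axis restriction gives $W^0(0,\dots,t_i,\dots,0)\le u_i(t_i)$ directly), and finally must \emph{recognize} the leftover factor $\prod_i\exp(-\tfrac12\norm{Pr_{\widetilde V_1}f_i}^2+\int Pr_{\widetilde V_1}f_i'\,dW_i^0)1_u(W^0)$ as the Girsanov representation of $P_{f-\underline f}$. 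You instead shift only by $\underline f=Pr_{V_2}f$, so $P_{f-\underline f}=\pk{A}$ appears immediately and only one Cameron--Martin identity is used; the price is that on your event the axis restriction reads $W_i\le u_i-g_i$ with $g_i=Pr_{\widetilde V_1}f_i\le 0$, so you need the extra structural lemma that $\int_{(0,T]}g_i\,d\underline{f_i}'=0$ and $g_i(T)\underline{f_i}'(T)\to0$ (the measure $d\underline{f_i}'$ is carried by the contact set of the concave majorant), which you correctly derive from the polar-cone orthogonality $\langle\underline{f_i},g_i\rangle=0$ together with the sign information from Lemma \ref{lemma 2.2}; the paper never needs this fact. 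Both arguments then use condition \eqref{conA1}, the monotonicity of $\underline{f_i}'$ and the same integration-by-parts control of the boundary term, and yield the identical bound; your version is arguably more transparent about where $P_{f-\underline f}$ comes from and makes explicit the pathwise estimate of $\mathcal{I}(\underline f)$ on the non-crossing event, while the paper's version avoids the contact-set identity altogether. (The remaining implicit assumptions --- finiteness of the measure $d\underline{f_i}'$ near $0$ and well-definedness of $\int u_i\,d\underline{f_i}'$ --- are shared with the paper's own proof, so they are not gaps specific to your argument.)
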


\begin{rem} Note that $f$ starts from zero therefore $f$ can not be a constant unless $f\equiv 0$ but this case is trivial.
\end{rem}
\begin{rem} Conditions \eqref{conA1} of the theorem means that asymptotically the components of shifts and their derivatives are negligible  in comparison with function $u$.
\end{rem}
Using Theorem \ref{Thn1}, we can obtain an asymptotically property of $P_{\gamma f}$, in fact, if $u(\mathbf{t})$ is bounded above, then we have the following result
\BK\label{korr}
If $f\in\kHC$ is such that $f(\mathbf{t}_0)$ for some $\mathbf{t}_0$, then
\BQNY
{\ln P_{\gamma f} \sim  \ln P_{\gamma \underline{f}} \sim - \frac{\gamma^2}{2}\norm{\underline{f}}^2, \quad \gamma \to \IF}.
\EQNY
\EK

\section{Proofs}\label{sec:proofs}
\prooflem{lemma 2.2} For $h\in V_1$, we have $h'$
 is non-increasing therefore  \pE{$h'$ } is  non-negative. Since $h(0)=0,$ thus $h(u)\geq 0$ for all $u$. The proof of statements (ii) to (v) can see in\cite{hashorva2014boundary}, we do not repeat the proof here.
\QED

\prooflem{lemma 3.2}
(i) If $h\in V_2$, from the definition of $V_2$, we obtain $h_1,h_2,\ldots,h_d \in V_1$. Thus $h_i\geq 0,i=1,2,\ldots,d$ follow directly from (i) in  Lemma \ref{lemma 2.2}
\newline
(ii) If $h(\mathbf{t})=h_1(t_1)+h_2(t_2)+\ldots+h_d(t_d)\in\widetilde{V}_2,$ then $h_i(t_i)\in\kHC.$  For any $f_i(t_i)\in V_1,$ let
\BQNY
v(\mathbf{t})=f_i(t_i)\in V_2.
\EQNY From the definition of $\widetilde{V}_2$, we obtain
$$\langle h,v\rangle=\langle h_i,f_i\rangle\leq 0.$$
Therefore, $h_i\in\widetilde{V}_1,$ and the results follow from (ii) in lemma \ref{lemma 2.2}.
\newline
The proof of statements $(iii)$ and $(iv)$ are similar to $(iii)$ and $(iv)$ in Lemma \ref{lemma 2.2}, and can obtain immediately from \cite{janssen}.
\newline
(v) For any $h(\mathbf{t})\in\kHC,$ let $g(\mathbf{t})\in\kHC$ such that $g\geq h$, we then have $g_i\geq h_i,i=1,2,\ldots,d,$ where
\BQNY
h&=&h_1+h_2+\ldots+h_d,\\
g&=&g_1+g_2+\ldots+g_d.
\EQNY
The minimization problem
\BQNY
\min_{g\geq h, g\in \kHC }\norm{g}&=&\min_{g\geq h, g\in \kHC}(\norm{g_1}+\norm{g_2}+\ldots+\norm{g_d})\\
&=&\sum_{i=1}^{d}\min_{g_i\geq h_i,g_i\in\kHA}\norm{g_i}\\
&=&\norm{\underline{h_1}}+\norm{\underline{h_2}}+\ldots+\norm{\underline{h_d}}.
\EQNY
The equalizes above hold if and only if
\begin{equation}
\underline{h}=Pr_{V_2}h=Pr_{V_1}h_1+ Pr_{V_1}h_2+\ldots+Pr_{V_1}h_d.
\end{equation}
Completing the proof.
\QED

\prooftheo{Thn1} Denote by $\widetilde{P}$ a probability measure that is defined via its Radon-Nikodym derivative
\begin{equation*}\begin{gathered} \frac{dP}{d\widetilde{P}}=\prod_{i=1}^{d}\exp\Big(-\frac12\|f_i\nO ^2+\int_{\R_+}f_i'(t_i)dW_i^0(t_i)\Big).
\end{gathered}
\end{equation*}
According to Cameron-Martin-Girsanov theorem,  $W_i^0(t)=W_i(t) +\int_{[0,t]}f_i'(s)ds,\;i=1,2,\ldots,d$ are \pE{independent} Wiener processes. Denote
$1_u\{X\}=1\{X(\mathbf{t})\leq u(\mathbf{t}),\;\mathbf{t}\in\R_+^d\}$ and $$W^0(\mathbf{t})=W_1^0(t_1)+ W_2^0(t_2)+\ldots+W_d^0(t_d).$$
Note that $ \norm{f}^2= \norm{f_1}^2+\norm{f_2}^2+\ldots+\norm{f_d}^2$, \pE{hence
using further} \eqref{eqhm-6} and \eqref{eqhm-7} we obtain
\BQNY
\lefteqn{P_f}\\
 &=&\EE{ 1_u\Big(\sum_{i=1}^{d}(W_i(t_i)+f_i(t_i))\Big)}\\
&=&\mathbb{E}_{\widetilde{P}}\Biggl( \frac{dP}{d\widetilde{P}}1_u\Big(W^0(\mathbf{t})\Big)\Biggr)\\
&=&
\exp\Big(-\frac12 \norm{f}^2 \Big) \EE{ \exp\Big(\sum_{i=1}^{d}\int_{\R_+}f_i'(t_i)dW_i^0(t_i)
\Big)1_u\Big(W^0(\mathbf{t})\Big)}\\
&=&\exp\Big(-\frac12 \norm{\underline{f}}^2 \Big)\\
&&\times \mathbb{E} \Biggl\{\prod_{i=1}^{d}\exp\Big(-\frac12\|Pr_{\widetilde{V}_1}{f_i} \nO ^2
+\int_{\R_+} Pr_{\widetilde{V}_1}f_i'(t_i)dW_i^0(t_i)\Big)
\times \exp\Big(\sum_{i=1}^{d}\int_{\R_+ } \underline{f_i}'(t_i)dW_i^0( {t_i})\Big)1_u\Big(W^0(\mathbf{t})\Big)\Biggr\}.
\EQNY
In order to re-write $\int_{\R_+ } \underline{f_1}'(t_1)dW_1^0( {t_1})$, we mention that in this integral $dW_1^0(t_1)=d_1(W^0(t_1,0,\ldots,0))$, therefore on the indicator $1_u\{\sum_{i=1}^{d} W_i^0(t_i)\}=1_u\{W^0(\mathbf{t})\}$
under conditions of the theorem and using lemma \ref{lemA3} in the Appendix we have the relations
\begin{equation}\begin{gathered}\label{eq3.2n}
\int_{\R_+}\underline{f_1}' (t_1)dW_1^0( {t_1})=\lim_{n\rightarrow \infty}\int_{[0,n]}\underline{f_1}' (t_1)dW_1^0( {t_1})\\
=\lim_{n\rightarrow \infty}\Big(\underline{f_1}' (n)W^0(n,0,\ldots,0)+\int_{[0,n]}W^0(t_1,0,\ldots,0)d(-\underline{f_1}' )(t_1)\Big).
\end{gathered}
\end{equation}
Similarly, for any $i=2,3,\ldots,d$ we have
\begin{equation}\label{eq3.3n}\int_{\R_+ }\underline{f_i}'(t_i)dW_i^0(t_i)
=\lim_{n\rightarrow \infty}\Big(\underline{f_i}' (n)W^0(0,\ldots,n,0,\ldots,0)+\int_{[0,n]}W^0(0,\ldots,t_i,0,\ldots,0)d(-\underline{f_i}' )(t_i)\Big).
\end{equation}

Combining \eqref{eq3.2n}--\eqref{eq3.3n}  and using conditions \eqref{conA1}, we get that  on the same indicator

\begin{equation}
\begin{gathered}\label{3.6n}\sum_{i=1}^{d}\int_{\R_+ } \underline{f_i}'(t_i)dW_i^0( {t_i})\leq \lim_{n\rightarrow \infty}\Big(\sum_{i=1}^{d}\underline{f_i}' (n)W^0(0,\ldots,n,0,\ldots,0)+\sum_{i=1}^{d}\int_{[0,n]}W^0(0,\ldots,t_i,0,\ldots,0)d(-\underline{f_i}' )(t_i)\Big)\\
\leq - \sum_{i=1}^{d}\int_{\R_+}u(0,\ldots,t_i,0,\ldots,0)d \underline{f_{i}'}(t_i).
\end{gathered}
\end{equation}
On the other hand, we have
\BQN\label{3.7n}
P_{f-\underline{f}}&=&\mathbb{E} \Biggl\{\prod_{i=1}^{d}\exp\Big(-\frac12\|f-\underline{f} \nO ^2
+\int_{\R_+} (f-\underline{f})'dW_i^0(t)\Big)
1_u\Big(W^0(\mathbf{t})\Big)\Biggr\}\\
\nonumber&=&\mathbb{E} \Biggl\{\prod_{i=1}^{d}\exp\Big(-\frac12\|Pr_{\widetilde{V}_1}{f_i} \nO ^2
+\int_{\R_+} Pr_{\widetilde{V}_1}f_i'(t)dW_i^0(t)\Big)
1_u\Big(W^0(\mathbf{t})\Big)\Biggr\}.
\EQN
From \eqref{3.6n} and \eqref{3.7n}, we conclude that
\begin{equation*}\begin{gathered}
P_f\leq P_{ f-\underline{f} }
\exp\biggl (- \sum_{i=1}^{d}\int_{\R_+}u(0,\ldots,t_i,0,\ldots,0)d \underline{f_{i}'}(t_i)
-\frac12\|\underline{f}\nOO ^2\biggr).
\end{gathered}
\end{equation*}
\QED

\proofkorr{korr} From \eqref{LD1} we obtain
\BQNY
\ln P_{\gamma f} \geq  -(1+o(1))\inf_{g\geq f}\frac{\gamma^2}{2}\norm{g}^2=-(1+o(1))\frac{\gamma^2}{2}\norm{\underline{f}}^2, \quad \gamma \to \IF.
\EQNY
On the other hand, from theorem \ref{Thn1} we obtain
\BQNY
P_{\gamma f} \leq  P_{\gamma(f-\underline{f})}\exp(-(1+o(1))\frac{\gamma^2}{2}\norm{\underline{f}}^2).
\EQNY
Since $f(\mathbf{t}_0)>0$, then $\lim_{\gamma\to\infty}P_{\gamma(f-\underline{f})}=constant>0$. Hence as $\gamma\to\infty,$
\BQNY
\ln P_{\gamma f} \leq -(1+o(1))\frac{\gamma^2}{2}\norm{\underline{f}}^2,
\EQNY
and the claim follows.
\QED

\section {Appendix}\label{sec:appendix}
\begin{lem}\label{lemA1} If the function $h:\R_+^d\rightarrow \R$ admits the representation
\begin{equation}\label{unique}
 h(\mathbf{t})=h_1(t_1)+h_2(t_2)+\ldots+h_d(t_d),
\end{equation}
 where $h_i\in \kHA, i=1,\ldots,d, $ then \pE{the} representation \eqref{unique} is unique.
\end{lem}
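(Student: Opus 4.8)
The plan is to reduce uniqueness to the elementary observation that every function in $\kHA$ vanishes at the origin, and then to isolate each summand by fixing all but one coordinate at $0$. So suppose $h$ admits two such representations,
$$h(\mathbf{t})=h_1(t_1)+h_2(t_2)+\ldots+h_d(t_d)=g_1(t_1)+g_2(t_2)+\ldots+g_d(t_d),$$
with $h_i,g_i\in\kHA$ for $i=1,\ldots,d$. Set $\phi_i=h_i-g_i$; then $\phi_i\in\kHA$ and $\sum_{i=1}^{d}\phi_i(t_i)=0$ for every $\mathbf{t}\in\R_+^d$. It suffices to show $\phi_i\equiv 0$ for each $i$.

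First I would record that any $\varphi\in\kHA$ satisfies $\varphi(0)=\int_{[0,0]}\varphi'(s)\,ds=0$, directly from the characterization of $\kHA$ recalled in Section~\ref{sec:pre}; in particular $\phi_i(0)=0$ for all $i$. Next, fix $j\in\{1,\ldots,d\}$ and evaluate the identity $\sum_{i=1}^{d}\phi_i(t_i)=0$ at the point $\mathbf{t}=(0,\ldots,0,t_j,0,\ldots,0)$ whose only possibly nonzero coordinate is the $j$-th. All terms with $i\neq j$ contribute $\phi_i(0)=0$, so we are left with $\phi_j(t_j)=0$ for every $t_j\in\R_+$. Since $j$ was arbitrary, $\phi_1=\cdots=\phi_d=0$, i.e. $h_i=g_i$ for all $i$, which is the claimed uniqueness.

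There is no genuine obstacle here: the argument is purely algebraic once the normalization $h_i(0)=0$ is in hand. The only point worth stating explicitly is that this normalization is not an extra hypothesis but is automatic from membership in $\kHA$, which is precisely what makes the coordinate-freezing trick work (otherwise one could shift constants between the summands). I would therefore keep the write-up short, emphasizing that one line and the evaluation at the axis points.
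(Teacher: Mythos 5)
Your argument is correct and coincides with the paper's own proof: both set all coordinates except the $i$-th to zero and use that every element of $\kHA$ vanishes at the origin to conclude $h_i=g_i$. Nothing to add.
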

\begin{proof}
If the function $h:\R_+^d\rightarrow \R$ admits the representation
\begin{equation}\label{eqhm-2}
 h(\mathbf{t})=\sum_{i=1}^{d}f_i(t_i)=\sum_{i=1}^{d}g_i(t_i),
\end{equation}
 where $f_i,g_i\in \kHA, i=1,2,\ldots,d.$
For any $i=1,2,\ldots,d,$ we put $t_j=0$ for $j\neq i$, and note that $f_j(0)=g_j(0)=0,$ then we obtain $f_i=g_i,i=1,2,\ldots,d.$ Hence the representation \eqref{unique} is unique.
\end{proof}
Noting that the convariance function of $W_i$ is $s_i\wedge t_i$, and the convariance function of processes $W(\mathbf{t})=W_1(t_1)+W_2(t_2)+\ldots+W_d(t_d)$ is given by
\begin{equation*}
R(\mathbf{s},\mathbf{t}):=\EE{W(\mathbf{s})W(\mathbf{t})}=\sum_{i=1}^{d}s_i\wedge t_i, \quad
\mathbf{s}=(s_1,s_2,\ldots,s_d), \;\mathbf{t}=(t_1,t_2,\ldots,t_d).
\end{equation*}
Next, we will identify the RKHS corresponding to a sum of $d$ covariances.
Suppose now $R_i,i=1,2,\ldots,d$ are $d$ covariances of Gaussian processes, the corresponding RKHS are
$\mathbb{K}_i,i=1,2,\ldots,d.$ We suppose also $\norm{\cdot}_i$ the inner product of RKHS $\mathbb{K}_i,i=1,2,\ldots,d.$
The following is a well-known lemma and we refer the reader to \cite{aronszajn1950theory}
for its proof.
\begin{lem}\label{lemA2}
The RHKS of Gaussian processes which with covariances $R=R_1+R_2+\ldots+R_d$ is then given by the Hilbert space $\mathbb{K}$ consists of all functions $f(\mathbf(t))=f_1(t_1)+f_2(t_2)+\ldots+f_d(t_d),$ with $f_i(t_i)\in\mathbb{K}_i,i=1,2,\ldots,d,$ and the norm is given by
\BQNY
\norm{f}=\inf(\norm{f_1}_1+\norm{f_2}_2+\ldots+\norm{f_d}_d),
\EQNY
where the infimum taken for all the decomposition $f(\mathbf{t})=f_1(t_1)+f_2(t_2)+\ldots+f_d(t_d),\;g(\mathbf{t})=g_1(t_1)+g_2(t_2)+\ldots+g_d(t_d)$ with $f_i(t_i),\;g_i(t_i)\in\mathbb{K}_i,i=1,2,\ldots,d.$  Furthermore,  if for any $f\in \mathbb{K}$, the decomposition $f(\mathbf{t})=f_1(t_1)+f_2(t_2)+\ldots+f_d(t_d)$ is unique, then the  inner product of $\mathbb{K}$ is
\BQNY
\langle f,g \rangle=\langle f_1,g_1 \rangle+\langle f_2,g_2 \rangle+\ldots+\langle f_d,g_d \rangle.
\EQNY
\end{lem}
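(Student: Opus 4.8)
The plan is to prove Lemma~\ref{lemA2} by the Moore--Aronszajn characterization of reproducing kernels: a Hilbert space $\mathcal{G}$ of real functions on a set $T$ is the RKHS of a symmetric positive-definite kernel $K$ if and only if $K(\cdot,t)\in\mathcal{G}$ for every $t\in T$ and $\langle g,K(\cdot,t)\rangle_{\mathcal{G}}=g(t)$ for all $g\in\mathcal{G}$, $t\in T$. Since such a space is unique, it suffices to exhibit \emph{one} Hilbert space of functions having the underlying set, the norm, and these two properties as asserted. (That $K=R_1+\cdots+R_d$ is again symmetric and positive-definite is immediate, being a sum of such kernels.)

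First I would form the external orthogonal direct sum $\mathcal{H}:=\mathbb{K}_1\oplus\cdots\oplus\mathbb{K}_d$, with $\norm{(f_1,\ldots,f_d)}_{\mathcal{H}}^2=\sum_{i=1}^{d}\norm{f_i}_i^2$, together with the summation map $\Phi$ sending $(f_1,\ldots,f_d)$ to the function $\mathbf{t}\mapsto f_1(t_1)+\cdots+f_d(t_d)$ on $\R_+^d$. Its range is precisely the set $\mathbb{K}$ described in the lemma. Because each $\mathbb{K}_i$ is an RKHS, point evaluations are bounded on $\mathbb{K}_i$, so $N:=\ker\Phi$ is a \emph{closed} subspace of $\mathcal{H}$; hence $\mathcal{H}=N\oplus N^{\perp}$ and $\Phi$ restricts to a linear bijection $N^{\perp}\to\mathbb{K}$. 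Declaring this restriction to be an isometry endows $\mathbb{K}$ with a Hilbert norm, and writing an arbitrary representative $(f_i)_i$ of $f$ as (an element of $N$) plus (an element of $N^{\perp}$) shows at once that this norm equals $\min\{\sum_i\norm{f_i}_i^2:\sum_i f_i=f\}$, the minimum being attained exactly at the $N^{\perp}$-representative; this is the correct reading of the displayed formula, with \emph{squared} norms. Completeness of $\mathbb{K}$ is then automatic, since $N^{\perp}$ is a closed subspace of the complete space $\mathcal{H}$.

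Next I would verify the two reproducing-kernel axioms for $K=R_1+\cdots+R_d$. Membership $K(\cdot,\mathbf{t})=\sum_i R_i(\cdot,t_i)\in\mathbb{K}$ is clear. For the reproducing identity the crucial observation is that the tuple $\rho_{\mathbf{t}}:=(R_1(\cdot,t_1),\ldots,R_d(\cdot,t_d))$ already lies in $N^{\perp}$: for any $(h_1,\ldots,h_d)\in N$ one has $\langle\rho_{\mathbf{t}},(h_1,\ldots,h_d)\rangle_{\mathcal{H}}=\sum_{i=1}^{d}\langle R_i(\cdot,t_i),h_i\rangle_i=\sum_{i=1}^{d}h_i(t_i)=\Phi(h_1,\ldots,h_d)(\mathbf{t})=0$, using the reproducing property of each $\mathbb{K}_i$. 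Consequently, writing $f\in\mathbb{K}$ as $f=\Phi(\widetilde f)$ with $\widetilde f=(f_i)_i\in N^{\perp}$, the transported inner product gives $\langle f,K(\cdot,\mathbf{t})\rangle_{\mathbb{K}}=\langle\widetilde f,\rho_{\mathbf{t}}\rangle_{\mathcal{H}}=\sum_{i=1}^{d}\langle f_i,R_i(\cdot,t_i)\rangle_i=\sum_{i=1}^{d}f_i(t_i)=f(\mathbf{t})$. By uniqueness of the RKHS this identifies $\mathbb{K}$, with the norm above, as the RKHS of $R$.

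Finally, the ``moreover'' clause is a corollary: if the decomposition $f=f_1+\cdots+f_d$ is unique for every $f\in\mathbb{K}$ (which holds in the present setting by Lemma~\ref{lemA1}), then $N=\{0\}$, so $\Phi$ is an isometric isomorphism of $\mathbb{K}_1\oplus\cdots\oplus\mathbb{K}_d$ onto $\mathbb{K}$, and the induced inner product is $\langle f,g\rangle=\sum_{i=1}^{d}\langle f_i,g_i\rangle_i$ with the now-unambiguous components. I expect the only delicate points to be (a) that $N$ is closed --- so that the infimum in the norm is attained and the quotient is complete --- which rests precisely on the continuity of point evaluations in each $\mathbb{K}_i$, and (b) the orthogonality relation $\rho_{\mathbf{t}}\perp N$, which is the heart of the reproducing-property check; the rest is routine bookkeeping. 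One may alternatively prove the case $d=2$ and iterate, which is how the classical result of \cite{aronszajn1950theory} is usually phrased.
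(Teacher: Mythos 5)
Your proof is correct and is essentially the classical Aronszajn argument that the paper itself invokes: the paper gives no in-text proof, simply citing \cite{aronszajn1950theory}, and your construction via the external direct sum $\mathbb{K}_1\oplus\cdots\oplus\mathbb{K}_d$, the closed kernel $N$ of the summation map, and the key orthogonality $\rho_{\mathbf{t}}\perp N$ is exactly the standard proof in that reference (and your reduction-to-$d=2$ remark matches how it is usually stated). Your reading of the norm formula with squares, $\norm{f}^2=\min\bigl(\norm{f_1}_1^2+\ldots+\norm{f_d}_d^2\bigr)$, is indeed the correct one; as printed in the lemma the squares are missing.
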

Also if we define the plus $\oplus$ among $\mathbb{K}_i,i=1,2,\ldots,d$ by $\mathbb{K}_i\oplus\mathbb{K}_j:=\{f=f_i+f_j\mid f_i\in\mathbb{K}_i,f_j\in\mathbb{K}_j\}$, then we can rewritten $\mathbb{K}$ as
\BQNY
\mathbb{K}=\mathbb{K}_1\oplus\mathbb{K}_2\oplus\ldots\oplus\mathbb{K}_d.
\EQNY
Let $W_1$ be a Wiener process, $h:\R_+\rightarrow \R$ be an integrable function, we can extend the integration of $h$ w.r.t $W_1$ on $\R_+$ by the following sence
\BQN\label{eq-integral}
\int_{\R_+}h(s)dW_1(s)=L_2-\lim_{n\rightarrow \infty}\int_{[0,n]}h (s)dW_1(s)
\EQN
whenever this limit exists. Furthermore, for any $h\in V_1,$ the derivative $h'\in  L_2(\R_+, \lambda_1)$ is non-increasing,  therefore $\int_{[0,n]}h'^2 (s)ds\leq h'^2 (0)n$ which implies that the integral $\int_{[0,n]}h (s)dW_1(s)$ is correctly defined as It\^o integral. We then can construct the  integration-by-parts formula
\begin{lem}\label{lemA3}
Let $h\in V_1,$ and  $W_1$ be a Wiener process. Then for any $T<\infty$, we have the following:
\BQN\label{eq-bypart}
\int_{[0,T]}h (s)dW_1(s)=\int_{[0,T]}W_1(s)d(-h (s))+h (T)W_1(T),
\EQN
where the integral in the right-hand side of \eqref{eq-bypart} is a Riemann-Stieltjes integral.
\end{lem}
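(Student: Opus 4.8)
The plan is to reduce the integration-by-parts identity \eqref{eq-bypart} to a standard fact about the Itô integral of a deterministic function of bounded variation. First I would note that since $h\in V_1$, its derivative $h'$ is non-increasing on $\R_+$, so $h$ is concave and, being continuous and finite-valued, $h$ has bounded variation on every compact interval $[0,T]$ (indeed, $h$ is the difference of a non-decreasing and a linear function there). In particular the deterministic Riemann–Stieltjes integral $\int_{[0,T]}W_1(s)\,d(-h(s))$ is well defined pathwise, because $W_1$ has continuous sample paths and $-h$ has bounded variation; so the right-hand side of \eqref{eq-bypart} makes sense.

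Next I would invoke the deterministic-integrand integration-by-parts formula for the Wiener integral: for a function $h$ of bounded variation on $[0,T]$ one has
\begin{equation*}
\int_{[0,T]}h(s)\,dW_1(s)=h(T)W_1(T)-\int_{[0,T]}W_1(s)\,dh(s),
\end{equation*}
which is exactly \eqref{eq-bypart} upon writing $-\int_{[0,T]}W_1(s)\,dh(s)=\int_{[0,T]}W_1(s)\,d(-h(s))$. The cleanest way to establish this is to start from the elementary (pathwise) product rule for a partition $0=s_0<s_1<\cdots<s_n=T$,
\begin{equation*}
h(T)W_1(T)=\sum_{k=1}^{n}h(s_{k-1})\bigl(W_1(s_k)-W_1(s_{k-1})\bigr)+\sum_{k=1}^{n}W_1(s_k)\bigl(h(s_k)-h(s_{k-1})\bigr),
\end{equation*}
and then let the mesh of the partition tend to zero. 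The first sum converges in $L^2$ to the Itô integral $\int_{[0,T]}h(s)\,dW_1(s)$ — this is where the square-integrability of $h$ on $[0,T]$, already noted in the text via $\int_{[0,n]}h'^2(s)\,ds\le h'^2(0)n$, is used — while the second sum converges pathwise to the Riemann–Stieltjes integral $\int_{[0,T]}W_1(s)\,dh(s)$ by continuity of the Brownian path together with the bounded variation of $h$. Matching the two limits yields the identity; one may pass to a subsequence to make both convergences hold simultaneously along the same partitions.

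The main obstacle is the bookkeeping in the limit: one must be careful that the Itô approximating sums use left endpoints $h(s_{k-1})$ while the Stieltjes sums may naturally use right endpoints $W_1(s_k)$, and that the two error terms arising from this mismatch (roughly $\sum_k \bigl(W_1(s_k)-W_1(s_{k-1})\bigr)\bigl(h(s_k)-h(s_{k-1})\bigr)$) vanish in the limit — which they do, since $W_1$ is uniformly continuous on $[0,T]$ and $\sum_k|h(s_k)-h(s_{k-1})|$ stays bounded by the total variation of $h$ on $[0,T]$. Everything else is routine. I would not belabor these estimates in the write-up, since the result is classical; the point worth stating explicitly is only that $h\in V_1$ guarantees both the $L^2$-boundedness needed for the Itô side and the bounded variation needed for the Riemann–Stieltjes side.
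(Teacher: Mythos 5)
Your argument is correct and follows essentially the same route as the paper: both start from the partition sums defining the It\^o integral $\int_{[0,T]}h(s)\,dW_1(s)$, apply summation by parts (the discrete product rule), and pass to the limit to identify the resulting sum with the Riemann--Stieltjes integral $\int_{[0,T]}W_1(s)\,d(-h(s))$ plus the boundary term $h(T)W_1(T)$. Your extra care about the left/right endpoint mismatch and the vanishing cross term only makes explicit what the paper leaves implicit.
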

\begin{proof}
From \cite{karatzas2012brownian}, for any partition $\pi$ of interval $[0,T],$ we obtain that the integral $\int_{[0,T]}h (s)dW_1(s)$ coincide with the limits in probability of integral sums
\BQNY
\int_{[0,T]}h (s)dW_1(s)&=&L_2-\lim_{\abs{\pi}\rightarrow 0}\sum_{i=1}^{N}h (s_{i-1})(W_1(s_i)-W_1(s_{i-1}))\\
&=&L_2-\lim_{\abs{\pi}\rightarrow 0}\sum_{i=1}^{N}W_1(s_i)(h (s_{i-1})-h (s_{i}))+W_1(T)h(T)\\
&=&\int_{[0,T]}W_1(s)d(-h (s))+W_1(T)h(T).
\EQNY
\end{proof}

{\bf Acknowledgment}:  This work was partly financed by the project NSFC No.71573143 and SNSF Grant 200021-166274.

\bibliographystyle{ieeetr}

 \bibliography{Additive2b}

\end{document}